\documentclass[draft]{amsart}
\usepackage{amsmath,amsthm}
\usepackage[english]{babel}
\usepackage{amsfonts}
\usepackage{latexsym}

\tolerance 800

\newtheorem{theorem}{Theorem}
\newtheorem{corollary}{Corollary}

\newcommand*{\cd}{(\cdot)}

\newcommand*{\wva}{\widetilde\varphi}

\newcommand*{\lpd}{L_p(\mathbb R^d)}
\newcommand*{\ld}{L_2(\mathbb R^d)}

\newcommand*{\wxi}{\widehat\xi}

\newcommand*{\wps}{\widetilde\psi}
\newcommand*{\wm}{\widehat m}
\newcommand*{\wx}{\widehat x}
\newcommand*{\wtt}{\widetilde t}
\newcommand*{\iRd}{\int_{\mathbb R^d}}
\newcommand*{\wt}{\widehat t}

\DeclareMathOperator*{\infp}{inf\vphantom p}
\DeclareMathOperator*{\vraisup}{vraisup}

\DeclareMathOperator*{\mes}{mes}

\tolerance1300

\begin{document}

\title[Recovery of differential operators]{Recovery of differential operators from a noisy Fourier transform}
\author{K.\,Yu.~Osipenko}
\address{Moscow State University\\
Institute of Information Transmission Problems
(Kharkevich Institute), RAS, Moscow}
\email{kosipenko@yahoo.com}
\subjclass[2010]{41A65, 42B10, 49N30}
\date{08.02.2024}
\keywords{Optimal recovery, differential operators, Fourier transform}

\begin{abstract}
The paper concerns problems of the recovery of differential operators from a noisy Fourier transform. In particular, optimal methods are obtained for the recovery of powers of generalized Laplace operators from a noisy Fourier transform in the $L_2$-metric.
\end{abstract}

\maketitle

\section{Introduction}

Let $X$ be a linear space, $Y,Z$ be normed linear spaces. The problem of optimal recovery of the linear operator $\Lambda\colon X\to Z$ by inaccurately given values of the linear operator $I\colon X\to Y$ on the set $W\subset X$ is posed as a problem of finding the value
$$E(\Lambda,W,I,\delta)=\infp_{\varphi\colon Y\to Z}\sup_{\substack{x\in W,\ y\in Y\\\|Ix-y\|_Y
\le\delta}}\|\Lambda x-\varphi(y)\|_Z,$$
called the {\it error of optimal recovery}, and the mapping $\varphi$ on which the lower bound is attained, called the {\it optimal recovery method} (here $\delta\ge0$ is a parameter that characterizes the error of setting the values of the operator $I$). Initially, this problem was posed for the case when $\Lambda$ is a linear functional, $Y$ is a finite-dimensional space and the information is known exactly ($\delta=0$), by S.~A.~Smolyak \cite{Sm}. In fact, this statement was a generalization of A.~N.~Kolmogorov's problem about the best quadrature formula on the class of functions \cite{N1}, in which the integral and the values of the functions are replaced by arbitrary linear functionals and there is no condition for the linearity of the recovery method.
Subsequently, much research has been devoted to extensions of this problem (see \cite{MR}--\cite{Osbo}, and the references given therein).

One of the first papers in which the problem of constructing an optimal recovery method for a linear operator was considered was the paper \cite{MM}. This topic was further developed in the papers \cite{MO02}--\cite{Os5}. It turned out that in some cases it is possible to construct a whole family of optimal recovery methods for a linear operator. The study of such families began in \cite{MO3} and continued in \cite{MO4}, \cite{MO5}, \cite{Os4}, \cite{Os5}. Some general approach to constructing of family of optimal recovery methods was proposed in \cite{Os24}.

The aim of this paper is to construct families of optimal recovery methods for powers of generalized Laplace operators and the Weil derivative from a noisy Fourier transform in the $L_2$-metric.

\section{Optimal recovery methods from a noisy Fourier transform}

Let $S$ be the Schwartz space of rapidly decreasing $C^\infty$-functions on  $\mathbb R^d$, $S'$ be the corresponding space of distributions, and let
$F\colon S'\to S'$ be the Fourier transform. Set
\begin{equation*}
X_p=\left\{\,x\cd\in S':\varphi\cd Fx\cd\in\ld,\ Fx\cd\in\lpd\,\right\}.
\end{equation*}
We define the operator $D$ as follows
$$Dx\cd=F^{-1}(\varphi\cd Fx\cd)\cd.$$
Put
\begin{equation}\label{Lam}
\Lambda x\cd=F^{-1}(\psi\cd Fx\cd)\cd.
\end{equation}

Consider the problem of the optimal recovery of values of the operator $\Lambda$ on the
class
\begin{equation*}
W_p=\left\{\,x\cd\in X_p:\|Dx\cd\|_{\ld}\le1\,\right\}
\end{equation*}
from the noisy Fourier transform of the function $x\cd$. Assume that $\Lambda x\cd\in\ld$ for all $x\cd\in X_p$. As recovery methods we consider all possible mappings $m\colon\lpd\to\ld$. The error of a method $m$ is defined by
\begin{equation*}
e_p(\Lambda,D,m)=\sup_{\substack{x\cd\in W_p,\ y\cd\in\lpd\\\|Fx\cd-y\cd\|_{\lpd}\le\delta}}\|\Lambda x\cd-m(y)\cd\|_{\ld}.
\end{equation*}
The quantity
\begin{equation}\label{ED}
E_p(\Lambda,D)=\inf_{m\colon\lpd\to\ld}e_p(\Lambda,D,m)
\end{equation}
is called the error of optimal recovery, and the method on which the infimum is
attained, an optimal method.

It is easily checked that
\begin{equation}\label{eq1.1}
E_p(\Lambda,D)\ge\sup_{\substack{x\cd\in W_p\\\|Fx\cd\|_{\lpd}\le\delta}}\|\Lambda x\cd\|_{\ld}.
\end{equation}
Indeed, let $x\cd\in W_p$, $\|Fx\cd\|_{\lpd}\le\delta$, and let $m\colon\lpd\to\ld$ be an arbitrary recovery method.
Since $x\cd\in W_p$ and $-x\cd\in W_p$, we have
$$2\|\Lambda x\cd\|_{\ld}\le\|\Lambda x\cd-m(0)\cd\|_{\ld}
+\|-\Lambda x\cd-m(0)\cd\|_{\ld}\le2e_p(\Lambda,D,m).$$
It follows that, for any method~$m$,
$$e_p(\Lambda,D,m)\ge\sup_{\substack{x\cd\in W_p\\\|Fx\cd\|_{\lpd}\le\delta}}\|\Lambda x\cd\|_{\ld}.$$
Now the required inequality follows by taking the lower bound on the left over all methods.

\section{Optimal recovery methods for $\Lambda_\theta^{\eta/2}$}

Consider the polar transformation in $\mathbb R^d$
\begin{equation*}
\arraycolsep=0.08em
\begin{array}{rcl}
t_1&=&\rho\cos\omega_1,\\
t_2&=&\rho\sin\omega_1\cos\omega_2,\\
\hdotsfor{3}\\
t_{d-1}&=&\rho\sin\omega_1\sin\omega_2\ldots\sin\omega_{d-2}\cos\omega_{d-1},\\
t_d&=&\rho\sin\omega_1\sin\omega_2\ldots\sin\omega_{d-2}\sin\omega_{d-1}.
\end{array}
\end{equation*}
Set $\omega=(\omega_1,\ldots,\omega_{d-1})$. For any function $f\cd$ we put
$$\widetilde f(\omega)=|f(\cos\omega_1,\ldots,\sin\omega_1\sin\omega_2\ldots
\sin\omega_{d-2}\sin\omega_{d-1})|.$$
Note that if $|f\cd|$ is a homogenous function of degree $\kappa$, then $\widetilde f(\omega)=\rho^{-\kappa}|f(t)|$.

Let $|\psi\cd|$ be homogenous function of degree $\eta$ and $|\varphi\cd|$ be homogenous functions of degrees $\nu$, $\psi(t)\ne0$ and $\varphi_j(t)\ne0$ for almost all $t\in\mathbb R^d$. Set
\begin{gather*}
\gamma=\frac{\nu-\eta}{\nu+d(1/2-1/p)},\quad q^*=\frac1{\gamma(1/2-1/p)},\\
C_p(\nu,\eta)=\gamma^{-\frac\gamma p}
(1-\gamma)^{-\frac{1-\gamma}2}\Biggl(\frac{B\left(q^*\gamma/p+1,
q^*(1-\gamma)/2\right)}{2|\nu-\eta|}\Biggr)^{1/q^*},
\end{gather*}
where $B(\cdot,\cdot)$ is the Euler beta-function.

It follows from \cite[Theorem~6]{Os244} (see also \cite[Theorem~3]{Os5}) the following result

\begin{theorem}\label{pred1}
Let $2<p\le\infty$, $\gamma\in(0,1)$. Assume that
\begin{equation}\label{Ieq}
I=\int_{\Pi^{d-1}}\frac{\wps^{q^*}(\omega)}{\wva^{q^*(1-\gamma)}(\omega)}
J(\omega)\,d\omega<\infty,\quad\Pi^{d-1}=[0,\pi]^{d-2}\times[0,2\pi].
\end{equation}
Then
$$E_p(\Lambda,D)=\frac1{(2\pi)^{d\gamma/2}}C_p(\nu,\eta)I^{1/q^*}\delta^\gamma.$$
The method
$$\wm(y)(t)=F^{-1}\left(\left(1-\beta\frac{|\varphi(t)|^2}{|\psi(t)|^2}\right)_+
\psi(t)y(t)\right),$$
where
\begin{equation*}
\beta=\frac{1-\gamma}{(2\pi)^{d\gamma}}C_p^2(\nu,\eta)\left(\delta I^{1/2-1/p}\right)^{2\gamma},
\end{equation*}
is optimal.

Moreover, the sharp inequality
$$\|\Lambda x\cd\|_{\ld}
\le\frac{C_p(\nu,\eta)I^{1/q^*}}{(2\pi)^{d\gamma/2}}\|Fx\cd\|_{\lpd}^\gamma
\|Dx\cd\|_{\ld}^{1-\gamma}$$
holds.
\end{theorem}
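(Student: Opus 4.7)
My plan is to reduce the theorem to the sharp inequality stated at the end, whose constant is then computed by a pointwise Lagrangian argument on the Fourier side. Once the sharp inequality and its extremal are known, the lower bound follows from~(\ref{eq1.1}) by rescaling, and the upper bound follows from a direct computation of $e_p(\Lambda, D, \widehat m)$.

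First, pass to the Fourier side. Setting $u = Fx$, Plancherel gives $\|\Lambda x\|_{\ld} = (2\pi)^{-d/2}\|\psi u\|_{\ld}$ and $\|Dx\|_{\ld} = (2\pi)^{-d/2}\|\varphi u\|_{\ld}$, while the noise constraint reads $\|u - y\|_{\lpd}\le\delta$. After absorbing the $(2\pi)$-factors, the sharp inequality of the theorem is equivalent to
$$\|\psi u\|_{\ld}\le C_p(\nu, \eta)I^{1/q^*}\|u\|_{\lpd}^\gamma\|\varphi u\|_{\ld}^{1-\gamma}.$$
The exponent $\gamma$ is forced by requiring scale invariance under $u(\cdot)\mapsto u(\lambda\cdot)$; combined with the homogeneity degrees $\eta, \nu$ of $|\psi|, |\varphi|$, this reproduces exactly the formula given for $\gamma$.

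To compute the constant I would form the Lagrangian
$$L(u) = \iRd|\psi(t)|^2|u(t)|^2\,dt - \lambda_1\iRd|u(t)|^p\,dt - \lambda_2\iRd|\varphi(t)|^2|u(t)|^2\,dt,$$
which is pointwise separable in $s = |u(t)|$. Maximizing at each $t$ gives the extremal $|u^*(t)| = \bigl((|\psi(t)|^2 - \lambda_2|\varphi(t)|^2)_+/(p\lambda_1/2)\bigr)^{1/(p-2)}$, supported where $|\psi|^2 > \lambda_2|\varphi|^2$. Passing to polar coordinates $t = \rho\omega$ and exploiting homogeneity, the substitution $\tau = \lambda_2|\varphi(t)|^2/|\psi(t)|^2 = \lambda_2\rho^{2(\nu-\eta)}\widetilde\varphi^2(\omega)/\widetilde\psi^2(\omega)$ reduces each radial integral to a beta integral, producing the factor $B(q^*\gamma/p + 1,\,q^*(1-\gamma)/2)$, while the residual angular integral yields $I^{1/q^*}$. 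Choosing $\lambda_1, \lambda_2$ to saturate both constraints then gives $C_p(\nu, \eta)$ and identifies the extremal.

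Finally, deduce the two bounds. Rescaling $u^*$ so that $\|u^*\|_{\lpd} = \delta$ and $(2\pi)^{-d/2}\|\varphi u^*\|_{\ld} = 1$ puts $x^* = F^{-1}u^*$ in $W_p$ with $\|Fx^*\|_{\lpd}\le\delta$, and~(\ref{eq1.1}) applied to $x^*$ yields the lower bound. For the upper bound, setting $\xi = y - Fx$ and invoking Plancherel,
$$(2\pi)^d\|\Lambda x - \widehat m(y)\|_{\ld}^2 = \iRd|(\psi(t) - k(t))Fx(t) - k(t)\xi(t)|^2\,dt,$$
where $k(t) = (1 - \beta|\varphi(t)|^2/|\psi(t)|^2)_+\psi(t)$. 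On $\{k\ne0\}$ one has $\psi - k = \beta\psi|\varphi|^2/|\psi|^2$, and on $\{k = 0\}$ one has $|\varphi|^2\ge|\psi|^2/\beta$; after expanding the square, applying Cauchy--Schwarz to the cross term, and using Hölder with the exponents from the sharp-inequality argument, the specific value of $\beta$ forces the estimate to collapse to the lower-bound constant. The main obstacle is the explicit integration in the Lagrangian step: evaluating the radial integral as a beta function and matching the angular factor to $I$ demands careful bookkeeping of homogeneity degrees, the Jacobian $J(\omega)$, and the Lagrange multipliers, after which the lower and upper bounds are essentially verifications.
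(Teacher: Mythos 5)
The paper does not actually prove Theorem~\ref{pred1}: it is imported wholesale from \cite[Theorem~6]{Os244} (see also \cite[Theorem~3]{Os5}), so there is no in-paper argument to compare against. That said, your outline is the standard scheme used in those references and in the paper's own proofs of Theorems~\ref{T2} and~\ref{T3}: lower bound via the extremal problem $\|\Lambda x\cd\|_{\ld}\to\max$ under $\|Fx\cd\|_{\lpd}\le\delta$, $\|Dx\cd\|_{\ld}\le1$, solved pointwise on the Fourier side by a Lagrangian; reduction of the radial integral to a beta function via homogeneity (this is exactly where $C_p(\nu,\eta)$ and the angular factor $I^{1/q^*}$ come from); upper bound by estimating the error of the smoothing method $\wm$. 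So the strategy is the right one.

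Two places where your sketch is thinner than the actual difficulty. First, the upper bound for $2<p<\infty$ is not a routine repetition of the $p=2$ computation that the paper carries out for Theorem~\ref{T2}: there the noise $z\cd$ is controlled in $L_2$ and the pointwise Cauchy--Bunyakovskii weight in front of $|z(\xi)|^2$ can be a constant, whereas for $p>2$ the noise is only controlled in $\lpd$, so the term $\int|k(\xi)|^2|z(\xi)|^2\,d\xi$ must be handled by H\"older with exponents $p/2$ and $p/(p-2)$, and one has to verify that $\|k\|_{L_{2p/(p-2)}}^2$ evaluates (again via the homogeneity/beta-function computation) to exactly the quantity that makes the bound collapse to $C_p(\nu,\eta)I^{1/q^*}(2\pi)^{-d\gamma/2}\delta^\gamma$; saying the value of $\beta$ ``forces the estimate to collapse'' is asserting precisely the identity that needs to be checked. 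Second, the endpoint $p=\infty$ is included in the theorem, but your Lagrangian term $\int|u|^p$ and the extremal formula $|u^*|^{p-2}=(\cdot)_+/(p\lambda_1/2)$ degenerate there; that case needs the separate (easier) argument with $|u^*|=\delta$ on the support set $\{|\psi|^2>\lambda_2|\varphi|^2\}$. Neither issue is a wrong turn --- both are resolved in \cite{Os5}, \cite{Os244} --- but as written the proposal defers the computations that constitute the actual content of the theorem.
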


Put
\begin{equation*}
\psi_\theta(\xi)=(|\xi_1|^\theta+\ldots+|\xi_d|^\theta)^{2/\theta},\quad\theta>0.
\end{equation*}
We denote by $\Lambda_\theta^{\eta/2}$ the operator $\Lambda$ which is defined by \eqref{Lam} for $\psi\cd=\psi_\theta^{\eta/2}\cd$. In particular, $\Lambda_2=-\Delta$, where $\Delta$ is the Laplace operator.

Consider problem \eqref{ED} for $\Lambda=\Lambda_\theta^{\eta/2}$ and $D=\Lambda_\mu^{\nu/2}$. Then for $I$ from \eqref{Ieq} we have
\begin{equation}\label{II}
I=\int_{\Pi^{d-1}}\frac{\left(\sum_{k=1}^d
\wtt_k^\theta(\omega)\right)^{\eta q^*/\theta}}{\left(\sum_{k=1}^d
\wtt_k^{\,\mu}(\omega)\right)^{\nu q^*(1-\gamma)/\mu}}J(\omega)\,d\omega,
\end{equation}
where
\begin{equation*}
\arraycolsep=0.08em
\begin{array}{rcl}
\wtt_1(\omega)&=&\cos\omega_1,\\
\wtt_2(\omega)&=&\sin\omega_1\cos\omega_2,\\
\hdotsfor{3}\\
\wtt_{d-1}(\omega)&=&\sin\omega_1\sin\omega_2\ldots\sin\omega_{d-2}\cos\omega_{d-1},\\
\wtt_d(\omega)&=&\sin\omega_1\sin\omega_2\ldots\sin\omega_{d-2}\sin\omega_{d-1}.
\end{array}
\end{equation*}
Note that
\begin{equation*}
\sum_{k=1}^d{\wtt_k\hspace{-3pt}}^2(\omega)=1.
\end{equation*}
If $\mu\le2$, then
\begin{equation}\label{etd1}
\sum_{k=1}^d
{\wtt_k\hspace{-3pt}}^\mu(\omega)\ge\sum_{k=1}^d
{\wtt_k\hspace{-3pt}}^2(\omega)=1.
\end{equation}
For $\mu>2$ by H\"older's inequality
\begin{equation*}
1=\sum_{k=1}^d
{\wtt_k\hspace{-3pt}}^2(\omega)\le\biggl(\sum_{k=1}^d
{\wtt_k\hspace{-3pt}}^\mu(\omega)\biggr)^{\frac2\mu}d^{1-\frac2{\mu}}.
\end{equation*}
Thus,
\begin{equation}\label{etd2}
\sum_{k=1}^d
{\wtt_k\hspace{-3pt}}^\mu(\omega)\ge d^{1-\frac\mu2}.
\end{equation}
It follows by \eqref{etd1} and \eqref{etd2} that $I<\infty$.

\begin{corollary}\label{cor}
Let $2<p\le\infty$, $\nu>\eta\ge0$, and $\theta,\mu>0$. Then
$$E_p(\Lambda_\theta^{\eta/2},\Lambda_\mu^{\nu/2})=\frac1{(2\pi)^{d\gamma/2}}C_p(\nu,\eta)I^{1/q^*}
\delta^{\gamma},$$
where $I$ is defined by \eqref{II}.
The method
$$\wm(y)(t)=F^{-1}\left(\left(1-\beta\frac{\psi_\mu^\nu(t)}{\psi_\theta^\eta(t)}\right)_+
\psi_\theta^{\eta/2}(t)y(t)\right),$$
where
\begin{equation*}
\beta=\frac{1-\gamma}{(2\pi)^{d\gamma}}C_p^2(\nu,\eta)\left(\delta I^{1/2-1/p}\right)^{2\gamma},
\end{equation*}
is optimal.

Moreover, the sharp inequality
$$\|\Lambda_\theta^{\eta/2}x\cd\|_{\ld}
\le\frac{C_p(\nu,\eta)I^{1/q^*}}{(2\pi)^{d\gamma/2}}\|Fx\cd\|_{\lpd}^\gamma
\|\Lambda_\mu^{\nu/2}x\cd\|_{\ld}^{1-\gamma}$$
holds.
\end{corollary}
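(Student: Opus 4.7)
My plan is to specialize Theorem~\ref{pred1} to the choice $\psi(\xi)=\psi_\theta^{\eta/2}(\xi)$ and $\varphi(\xi)=\psi_\mu^{\nu/2}(\xi)$. First I would verify the hypotheses: since $\psi_\theta$ is a positive function homogeneous of degree $2$, the function $\psi_\theta^{\eta/2}$ is homogeneous of degree $\eta$, and likewise $\psi_\mu^{\nu/2}$ is homogeneous of degree $\nu$. Both vanish only at the origin, and the assumptions $\nu>\eta\ge 0$ together with $p>2$ force $\gamma=(\nu-\eta)/(\nu+d(1/2-1/p))\in(0,1)$. Thus every condition of Theorem~\ref{pred1} other than the finiteness of $I$ is automatic.

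Next I would carry out the substitutions in the conclusions of Theorem~\ref{pred1}. The ratio $|\varphi(t)|^2/|\psi(t)|^2$ simplifies to $\psi_\mu^{\nu}(t)/\psi_\theta^{\eta}(t)$, and $\psi(t)=\psi_\theta^{\eta/2}(t)$, so the formula for $\wm$ from Theorem~\ref{pred1} reproduces verbatim the method displayed in the corollary. Evaluating $\psi_\theta^{\eta/2}$ and $\psi_\mu^{\nu/2}$ at the sphere point $(\wtt_1(\omega),\ldots,\wtt_d(\omega))\in\Sd$ gives
$$\wps(\omega)=\Bigl(\sum_{k=1}^d \wtt_k^{\,\theta}(\omega)\Bigr)^{\eta/\theta},\qquad \wva(\omega)=\Bigl(\sum_{k=1}^d \wtt_k^{\,\mu}(\omega)\Bigr)^{\nu/\mu},$$
so that $\wps^{q^*}/\wva^{q^*(1-\gamma)}$ is precisely the integrand appearing in \eqref{II}. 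The constant and the exponents of $\delta$ and of $I$ transfer directly, as does the sharp inequality.

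The only remaining step is to verify $I<\infty$. A uniform positive lower bound on the denominator $\sum_{k=1}^d \wtt_k^{\,\mu}(\omega)$ is given by \eqref{etd1} when $\mu\le 2$ and by \eqref{etd2} when $\mu>2$. The numerator is uniformly bounded above by the symmetric dichotomy: $\sum_{k=1}^d \wtt_k^{\,\theta}(\omega)\le 1$ for $\theta\ge 2$ by the $\ell^p$ embedding into $\ell^2$, and $\sum_{k=1}^d \wtt_k^{\,\theta}(\omega)\le d^{1-\theta/2}$ for $\theta<2$ by H\"older's inequality applied as in \eqref{etd2}. Since the Jacobian $J(\omega)$ is bounded on the compact region $\Pi^{d-1}$, this forces $I<\infty$, after which Theorem~\ref{pred1} delivers all three assertions of the corollary.

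The one point that requires genuine care is the lower bound on the denominator when $\mu>2$: individual coordinates $\wtt_k(\omega)$ may vanish, so the monotone comparison $\sum \wtt_k^{\,\mu}\ge \sum \wtt_k^{\,2}=1$ is unavailable and the reverse-H\"older argument in \eqref{etd2} is essential. Everything else is a straightforward specialization of the abstract theorem.
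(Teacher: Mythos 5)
Your proposal is correct and follows essentially the same route as the paper: specialize Theorem~\ref{pred1} with $\psi=\psi_\theta^{\eta/2}$, $\varphi=\psi_\mu^{\nu/2}$ (homogeneous of degrees $\eta$ and $\nu$, nonvanishing off the origin), and reduce the finiteness of $I$ to the lower bounds \eqref{etd1} and \eqref{etd2} on the denominator, the numerator being trivially bounded since each $\wtt_k(\omega)\le1$. The paper treats exactly these points (and leaves the rest as a direct substitution), so no further comment is needed.
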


Now we consider the case when $p=2$.

\begin{theorem}\label{T2}
Let $\nu>\eta>0$ and $0<\theta\le\mu$. Then
\begin{equation}\label{E}
E_2(\Lambda_\theta^{\eta/2},\Lambda_\mu^{\nu/2})= d^{\eta(1/\theta-1/\mu)}\left(\frac{\delta}{(2\pi)^{d/2}}\right)^{1-\eta/\nu},
\end{equation}
and all methods
\begin{equation}\label{mma}
\wm(y)(t)=F^{-1}\left(a(t)\psi_\theta^{\eta/2}(t)y(t)\right),
\end{equation}
where $a\cd$ are measurable functions satisfying the condition
\begin{equation}\label{aa}
\psi_\theta^\eta(\xi)\left(\frac{|1-a(\xi)|^2}{\lambda_2\psi_\mu^\nu(\xi)}+\frac{|a(\xi)|^2}
{(2\pi)^d\lambda_1}\right)\le1,
\end{equation}
in which
\begin{equation*}
\lambda_1=\frac{d^{2\eta(1/\theta-1/\mu)}}{(2\pi)^d}\left(1-\frac\eta\nu\right)
\left(\frac{(2\pi)^d}{\delta^2}
\right)^{\eta/\nu},
\quad\lambda_2=d^{2\eta(1/\theta-1/\mu)}\frac\eta\nu \left(\frac{(2\pi)^d}{\delta^2}
\right)^{\eta/\nu-1},
\end{equation*}
are optimal.

The sharp inequality
\begin{equation}\label{Sl2}
\|\Lambda_\theta^{\eta/2}x\cd\|_{\ld}
\le\frac{d^{\eta(1/\theta-1/\mu)}}{(2\pi)^{d(1-\eta/\nu)/2}}
\|Fx\cd\|_{\ld}^{1-\eta/\nu}\|\Lambda_\mu^{\nu/2}x\cd\|_{\ld}^{\eta/\nu}
\end{equation}
holds.
\end{theorem}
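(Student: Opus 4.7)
The plan is to split Theorem~\ref{T2} into three coordinated steps: first establish the sharp inequality \eqref{Sl2}, then use it together with \eqref{eq1.1} to get the matching lower bound on $E_2$, and finally verify that every $a\cd$ obeying \eqref{aa} yields a method of matching error.

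For \eqref{Sl2}, the pointwise bound $\psi_\theta^\eta(\xi)\le d^{2\eta(1/\theta-1/\mu)}\psi_\mu^\eta(\xi)$ (H\"older applied to the $d$ coordinates, which is where the hypothesis $\theta\le\mu$ enters) combined with Plancherel reduces matters to estimating $\int\psi_\mu^\eta|Fx|^2$. Applying H\"older once more with exponents $\nu/\eta$ and $\nu/(\nu-\eta)$ to the factorisation $\psi_\mu^\eta|Fx|^2=(\psi_\mu^\nu|Fx|^2)^{\eta/\nu}(|Fx|^2)^{1-\eta/\nu}$ produces \eqref{Sl2} after a square root. For the matching lower half of \eqref{E} I would saturate both H\"older steps simultaneously by taking $Fx_\epsilon\cd$ to be a smooth bump of $L_2$-mass $\delta$ supported in a shrinking neighbourhood of $\xi_\ast=\rho(1,\ldots,1)/\sqrt d$, with $\rho$ fixed by $\psi_\mu^\nu(\xi_\ast)=(2\pi)^d/\delta^2$ so that the constraints $\|Fx_\epsilon\cd\|_{\ld}\le\delta$ and $\|\Lambda_\mu^{\nu/2}x_\epsilon\cd\|_{\ld}\le1$ saturate together; letting $\epsilon\to0$ and invoking \eqref{eq1.1} gives the lower bound.

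For the upper bound and the optimality of every method \eqref{mma} with $a\cd$ satisfying \eqref{aa}, I would start from the identity $Fx-ay=(1-a)Fx+a(Fx-y)$ and apply the weighted Cauchy--Schwarz inequality with weights $w_1=\lambda_2\psi_\mu^\nu$ and $w_2=(2\pi)^d\lambda_1$, obtaining $|Fx-ay|^2\le\bigl(|1-a|^2/(\lambda_2\psi_\mu^\nu)+|a|^2/((2\pi)^d\lambda_1)\bigr)\bigl(\lambda_2\psi_\mu^\nu|Fx|^2+(2\pi)^d\lambda_1|Fx-y|^2\bigr)$. Multiplying through by $\psi_\theta^\eta$, condition \eqref{aa} kills the first factor pointwise, so integrating and using $\|\Lambda_\mu^{\nu/2}x\cd\|_{\ld}\le1$ and $\|Fx\cd-y\cd\|_{\ld}\le\delta$ yields $\|\Lambda_\theta^{\eta/2}x\cd-\wm(y)\cd\|_{\ld}^2\le\lambda_1\delta^2+\lambda_2$. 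A direct substitution of the stated values of $\lambda_1,\lambda_2$ then identifies this bound with the square of the right-hand side of \eqref{E}.

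The main difficulty I anticipate is the extremal construction in step two: one must verify $x_\epsilon\cd\in X_2$ and drive both $L_2$-constraints and both H\"older inequalities to equality simultaneously as $\epsilon\to0$. Step three is essentially algebraic bookkeeping, and the family \eqref{aa} is nonempty: for instance the choice $a(\xi)=(2\pi)^d\lambda_1/\bigl((2\pi)^d\lambda_1+\lambda_2\psi_\mu^\nu(\xi)\bigr)$ satisfies \eqref{aa} as a consequence of the tangent-line bound $\psi_\mu^\eta\le(\eta/\nu)c^{\eta/\nu-1}\psi_\mu^\nu+(1-\eta/\nu)c^{\eta/\nu}$ for the concave function $t\mapsto t^{\eta/\nu}$ at $c=(2\pi)^d/\delta^2$.
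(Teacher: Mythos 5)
Your proposal is correct, and its core architecture coincides with the paper's: the lower bound for $E_2$ comes from \eqref{eq1.1} applied to a bump concentrated near the point in the direction $(1,\ldots,1)$ where $\psi_\mu^\nu=(2\pi)^d/\delta^2$ (the paper shifts the center of the ball inward by $\varepsilon$ precisely to guarantee $\|\Lambda_\mu^{\nu/2}x_\varepsilon\cd\|_{\ld}\le1$ exactly, which is the detail you rightly flag as needing care), and the upper bound for every method \eqref{mma} is exactly your weighted Cauchy--Bunyakovskii--Schwarz estimate giving $\lambda_1\delta^2+\lambda_2$. You deviate in two places, both legitimately. First, you prove \eqref{Sl2} directly by composing the pointwise bound $\psi_\theta^\eta\le d^{2\eta(1/\theta-1/\mu)}\psi_\mu^\eta$ with H\"older at exponents $\nu/\eta$, $\nu/(\nu-\eta)$; the paper instead deduces \eqref{Sl2} a posteriori from the already-computed value of the recovery error via the scaling $\wx\cd=x\cd/A$, $\delta=\|F\wx\cd\|_{\ld}$. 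Your route is more self-contained and makes the sharpness of the constant visible through the same extremal bumps; the paper's route is the standard ``sharp multiplicative inequality $=$ optimal recovery error'' mechanism and generalizes to settings where a direct H\"older proof is unavailable. Second, for nonemptiness of \eqref{aa} you replace the paper's explicit minimization of $f(\rho)=(2\pi)^d\lambda_1+\lambda_2\rho^{2\nu}d^{2\nu(1/\mu-1/\theta)}-\rho^{2\eta}$ by the tangent-line bound for the concave map $t\mapsto t^{\eta/\nu}$ at $c=(2\pi)^d/\delta^2$; these are equivalent (the tangency point is the paper's $\rho_0$), but note that your tangent-line inequality controls $\psi_\mu^\eta$, so you must still invoke the H\"older comparison $\psi_\theta^\eta\le d^{2\eta(1/\theta-1/\mu)}\psi_\mu^\eta$ from your first step to pass to $\psi_\theta^\eta$ and match the stated $\lambda_1,\lambda_2$ -- this is where the hypothesis $\theta\le\mu$ enters a second time.
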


\begin{proof}
It follows from \eqref{eq1.1} that
\begin{equation}\label{EDD2}
E_2(\Lambda_\theta^{\eta/2},\Lambda_\mu^{\nu/2})\ge\sup_{\substack{x\cd\in W_2\\\|Fx\cd\|_{\ld}\le\delta}}\|\Lambda_\theta^{\eta/2}x\cd\|_{\ld}.
\end{equation}
Consider the extremal problem
$$\|\Lambda_\theta^{\eta/2}x\cd\|_{\ld}^2\to\max,\quad\|Fx\cd\|_{\ld}^2\le\delta^2,\quad
\|\Lambda_\mu^{\nu/2}x\cd\|_{\ld}^2\le1.$$
Given $0<\varepsilon<d^{-1/\mu}(2\pi)^{d/\nu}\delta^{-2/\nu}$, we set
\begin{equation*}
\wxi_\varepsilon=\frac1{d^{1/\mu}}\left(\frac{(2\pi)^d}{\delta^2}\right)^{\frac1{2\nu}}
(1,\ldots,1)-(\varepsilon,\ldots,\varepsilon),\quad B_\varepsilon=\{\xi\in\mathbb R^d:|\xi-\wxi_\varepsilon|<\varepsilon\,\}.
\end{equation*}
Consider a function $x_\varepsilon\cd$ such that
$$Fx_\varepsilon(\xi)=\begin{cases}\dfrac\delta{\sqrt{\mes B_\varepsilon}},&\xi\in B_\varepsilon,\\
0,&\xi\notin B_\varepsilon.\end{cases}$$
Then $\|Fx_\varepsilon\cd\|^2_{\ld}=\delta^2$ and
\begin{equation*}
\|\Lambda_\mu^{\nu/2}x\cd\|^2_{\ld}
=\frac{\delta^2}{(2\pi)^d\mes B_\varepsilon}\int_{B_\varepsilon}(|\xi_1|^\mu+\ldots+|\xi_d|^\mu)^{2\nu/\mu}\,d\xi
\le1.
\end{equation*}
By virtue of \eqref{EDD2} we have
\begin{multline*}
E_2^2(\Lambda_\theta^{\eta/2},\Lambda_\mu^{\nu/2})\ge\|\Lambda_\theta^{\eta/2}x_\varepsilon\cd\|_{L_2(\mathbb R^d)}^2\\
=\frac{\delta^2}{(2\pi)^d\mes B_\varepsilon}\int_{B_\varepsilon}\psi_\theta^\eta(\xi)\,d\xi
=\frac{\delta^2}{(2\pi)^d}\psi_\theta^\eta(\widetilde\xi_\varepsilon),\quad
\widetilde\xi_\varepsilon\in B_\varepsilon.
\end{multline*}
Letting $\varepsilon\to0$ we obtain the estimate
\begin{equation}\label{EE}
E_2^2(\Lambda_\theta^{\eta/2},\Lambda_\mu^{\nu/2})\ge d^{2\eta(1/\theta-1/\mu)}\left(\frac{\delta^2}{(2\pi)^d}\right)^{1-\eta/\nu}.
\end{equation}

We will find optimal methods among methods \eqref{mma}. Passing to the Fourier transform we have
\begin{equation*}
\|\Lambda_\theta^{\eta/2}x\cd-\wm(y)\cd\|^2_{\ld}\\
=\frac1{(2\pi)^d}\iRd\psi_\theta^\eta(\xi)\left|Fx(\xi)-a(\xi)y(\xi)\right|^2\,d\xi.
\end{equation*}
We set $z\cd=Fx\cd-y\cd$ and note that
\begin{equation*}
\iRd|z(\xi)|^2\,d\xi\le\delta^2,\quad\frac1{(2\pi)^d}\iRd\psi_\mu^\nu(\xi)|Fx(\xi)|^2\,d\xi\le1.
\end{equation*}
Then
\begin{equation*}
\|\Lambda_\theta^{\eta/2}x\cd-\wm(y)\cd\|^2_{\ld}
=\frac1{(2\pi)^d}\iRd\psi_\theta^\eta(\xi)\left|\left(1-a(\xi)\right)Fx(\xi)+
a(\xi)z(\xi)\right|^2\,d\xi.
\end{equation*}
We write the integrand as
\begin{equation*}
\psi_\theta^\eta(\xi)\left|\frac{(1-a(\xi))\sqrt{\lambda_2}\psi_\mu^{\nu/2}(\xi)
Fx(\xi)}{\sqrt{\lambda_2}\psi_\mu^{\nu/2}(\xi)}+
\frac{a(\xi)}{(2\pi)^{d/2}\sqrt{\lambda_1}}(2\pi)^{d/2}\sqrt{\lambda_1}z(\xi)\right|^2.
\end{equation*}
Applying the Cauchy-Bunyakovskii-Schwarz inequality we obtain the estimate
\begin{multline*}
\|\Lambda_\theta^{\eta/2}x\cd-\wm(y)\cd\|^2_{\ld}\\
\le\vraisup_{\xi\in\mathbb R^d}S(\xi)\frac1{(2\pi)^d}\iRd\left(\lambda_2\psi_\mu^\nu(\xi)|Fx(\xi)|^2+
(2\pi)^d\lambda_1|z(\xi)|^2\right)\,d\xi,
\end{multline*}
where
\begin{equation*}
S(\xi)=\psi_\theta^\eta(\xi)\left(\frac{|1-a(\xi)|^2}{\lambda_2\psi_\mu^\nu(\xi)}+
\frac{|a(\xi)|^2}{(2\pi)^d\lambda_1}\right).
\end{equation*}
If we assume that $S(\xi)\le1$ for almost all $\xi$, then taking into account \eqref{EE}, we get
\begin{multline}\label{Ub}
e^2_2(\Lambda_\theta^{\eta/2},\Lambda_\mu^{\nu/2},\wm)\\
\le\frac1{(2\pi)^d}\iRd\left(\lambda_2\psi_\mu^\nu(\xi)|Fx(\xi)|^2+
(2\pi)^d\lambda_1|z(\xi)|^2\right)\,d\xi
\le\lambda_2+\lambda_1\delta^2\\=d^{2\eta(1/\theta-1/\mu)}\left(\frac{\delta^2}
{(2\pi)^d}\right)^{1-\eta/\nu}\le E_2^2(\Lambda_\theta^{\eta/2},\Lambda_\mu^{\nu/2}).
\end{multline}
This proves \eqref{E} and shows that the methods under consideration are optimal.

It remains to verify that the set of functions $a\cd$ satisfying \eqref{aa} is nonempty. Put
\begin{equation*}
a(\xi)=\frac{(2\pi)^d\lambda_1}{(2\pi)^d\lambda_1+\lambda_2\psi_\mu^\nu(\xi)}.
\end{equation*}
Then
\begin{equation*}
S(\xi)=\frac{\psi_\theta^\eta(\xi)}{(2\pi)^d\lambda_1+\lambda_2\psi_\mu^\nu(\xi)}.
\end{equation*}
Since $\theta\le\mu$ by H\"older's inequality
\begin{equation*}
\sum_{j=1}^d|\xi_j|^\theta\le\biggl(\sum_{j=1}^d|\xi_j|^\mu\biggr)^{\theta/\mu}
d^{1-\theta/\mu}.
\end{equation*}
Putting $\rho=(|\xi_1|^\theta+\ldots+|\xi_d|^\theta)^{1/\theta}$, we obtain
\begin{equation*}
\sum_{j=1}^d|\xi_j|^\mu\ge\rho^\mu d^{1-\mu/\theta}.
\end{equation*}
Thus,
\begin{equation*}
S(\xi)\le\frac{\rho^{2\eta}}{(2\pi)^d\lambda_1+\lambda_2\rho^{2\nu}d^{2\nu(1/\mu-1/\theta)}}.
\end{equation*}
It is easily checked that the function $f(\rho)=(2\pi)^d\lambda_1+\lambda_2\rho^{2\nu}d^{2\nu(1/\mu-1/\theta)}-\rho^{2\eta}$ reaches a minimum on  $[0,+\infty)$ at
\begin{equation*}
\rho_0=d^{1/\theta-1/\mu}\left(\frac{(2\pi)^d}{\delta^2}
\right)^{1/(2\nu)}.
\end{equation*}
Moreover, $f(\rho_0)=0$. Consequently, $f(\rho)\ge0$ for all $\rho\ge0$. Hence $S(\xi)\le1$ for all $\xi$.

Let $x\cd\in X_2$ for $\varphi\cd=\psi_\mu^{\nu/2}\cd$. Put $A=\|\Lambda_\mu^{\nu/2}x\cd\|_{\ld}+\varepsilon$, $\varepsilon>0$. Consider $\wx\cd=x\cd/A$. Put $\delta=\|F\wx\cd\|_{\ld}$. It follows from \eqref{Ub} that
\begin{equation}\label{EEpqr}
\sup_{\substack{x\cd\in W_2\\\|Fx\cd\|_{\ld}\le\delta}}\|\Lambda_\theta^{\eta/2}x\cd\|_{\ld}=
E_2(\Lambda_\theta^{\eta/2},\Lambda_\mu^{\nu/2})=d^{\eta(1/\theta-1/\mu)}\left(\frac{\delta}
{(2\pi)^{d/2}}\right)^{1-\eta/\nu}.
\end{equation}
Thus,
$$\|\Lambda_\theta^{\eta/2}\wx\cd\|_{\ld}\le d^{\eta(1/\theta-1/\mu)}\left(\frac{\delta}
{(2\pi)^{d/2}}\right)^{1-\eta/\nu}.$$
Consequently,
$$\|\Lambda_\theta^{\eta/2}x\cd\|_{\ld}\le\frac{d^{\eta(1/\theta-1/\mu)}}{(2\pi)^{d(1-\eta/\nu)/2}}
\|Fx\cd\|_{\ld}^{1-\eta/\nu}\left(\|\Lambda_\mu^{\nu/2}x\cd\|_{\ld}+
\varepsilon\right)^{\eta/\nu}.$$
Letting $\varepsilon\to0$ we obtain \eqref{Sl2}.

If there exists a
$$C<\frac{d^{\eta(1/\theta-1/\mu)}}{(2\pi)^{d(1-\eta/\nu)/2}}$$
for which
$$\|\Lambda_\theta^{\eta/2}x\cd\|_{\ld}
\le C\|Fx\cd\|_{\ld}^{1-\eta/\nu}\|\Lambda_\mu^{\nu/2}x\cd\|_{\ld}^{\eta/\nu},$$
then
$$\sup_{\substack{x\cd\in W_2\\\|Fx\cd\|_{\ld}\le\delta}}\|\Lambda_\theta^{\eta/2}x\cd\|_{\ld}\le C\delta^{1-\eta/\nu}<\frac{d^{\eta(1/\theta-1/\mu)}}{(2\pi)^{d(1-\eta/\nu)/2}}
\delta^{1-\eta/\nu}.$$
This contradicts with \eqref{EEpqr}.
\end{proof}

Let $\alpha=(\alpha_1,\ldots,\alpha_d)\in\mathbb R_+^d$. We define the operator $D^\alpha$ (the derivative of order $\alpha$) by
\begin{equation*}
D^\alpha x\cd=F^{-1}((i\xi)^\alpha Fx(\xi))\cd,
\end{equation*}
where $(i\xi)^\alpha=(i\xi_1)^{\alpha_1}\ldots(i\xi_d)^{\alpha_d}$. For $\mu=2\nu$ we have
\begin{multline*}
\|\Lambda_{2\nu}^{\nu/2}x\cd\|_{\ld}^2=\frac1{(2\pi)^d}\iRd\left(|\xi_1|^{2\nu}+\ldots+
|\xi_d|^{2\nu}\right)|Fx(\xi)|^2\,d\xi\\
=\sum_{j=1}^d\|D^{\nu e_j}x\cd\|_{\ld}^2,
\end{multline*}
where $e_j$, $j=1\ldots,d$, is a standard basis in $\mathbb R^d$.

From Theorem~\ref{T2} we obtain the following result.

\begin{corollary}
Let $\nu>\eta>0$ and $0<\theta\le2\nu$. Then
$$E_2(\Lambda_\theta^{\eta/2},\Lambda_{2\nu}^{\nu/2})= d^{\eta(1/\theta-1/(2\nu))}\left(\frac{\delta}{(2\pi)^{d/2}}\right)^{1-\eta/\nu},$$
and all methods
$$\wm(y)(t)=F^{-1}\left(a(t)\psi_\theta^{\eta/2}(t)y(t)\right),$$
where $a\cd$ are measurable functions satisfying the condition
$$\psi_\theta^\eta(\xi)\left(\frac{|1-a(\xi)|^2}{\lambda_2\psi_{2\nu}^\nu(\xi)}+\frac{|a(\xi)|^2}
{(2\pi)^d\lambda_1}\right)\le1,$$
in which
\begin{align*}
\lambda_1&=\frac{d^{2\eta(1/\theta-1/(2\nu))}}{(2\pi)^d}\left(1-\frac\eta\nu\right)
\left(\frac{(2\pi)^d}{\delta^2}
\right)^{\eta/\nu},\\
\quad\lambda_2&=d^{2\eta(1/\theta-1/(2\nu))}\frac\eta\nu\left(\frac{(2\pi)^d}{\delta^2}
\right)^{\eta/\nu-1},
\end{align*}
are optimal.

The sharp inequality
\begin{equation}\label{Sl2a}
\|\Lambda_\theta^{\eta/2}x\cd\|_{\ld}
\le\frac{d^{\eta(1/\theta-1/(2\nu))}}{(2\pi)^{d(1-\eta/\nu)/2}}
\|Fx\cd\|_{\ld}^{1-\eta/\nu}\biggl(\sum_{j=1}^d\|D^{\nu e_j}x\cd\|_{\ld}^2\biggr)^{\eta/(2\nu)}
\end{equation}
holds.
\end{corollary}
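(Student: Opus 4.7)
The statement to prove is the corollary immediately following Theorem~\ref{T2}, which is the specialization $\mu=2\nu$ together with a rewritten sharp inequality in terms of pure partial derivatives $D^{\nu e_j}$. My plan is essentially to invoke Theorem~\ref{T2} and translate the norm on the right-hand side of the sharp inequality.

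First, I would verify that the hypotheses of Theorem~\ref{T2} are met with the substitution $\mu=2\nu$: the conditions $\nu>\eta>0$ are given verbatim, and the hypothesis $\theta\le\mu$ becomes exactly the stated hypothesis $\theta\le 2\nu$. Plugging $\mu=2\nu$ into formula~\eqref{E} yields $E_2(\Lambda_\theta^{\eta/2},\Lambda_{2\nu}^{\nu/2})=d^{\eta(1/\theta-1/(2\nu))}(\delta/(2\pi)^{d/2})^{1-\eta/\nu}$, which is the claimed value. Similarly, substituting $\mu=2\nu$ into the formulas for $\lambda_1$ and $\lambda_2$ and into condition~\eqref{aa} reproduces verbatim the statements about the optimal family $\wm(y)$. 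So the first two assertions of the corollary are immediate restatements of Theorem~\ref{T2}.

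Next, for the sharp inequality~\eqref{Sl2a}, I would start from inequality~\eqref{Sl2} of Theorem~\ref{T2}, again with $\mu=2\nu$:
\[
\|\Lambda_\theta^{\eta/2}x\cd\|_{\ld}\le\frac{d^{\eta(1/\theta-1/(2\nu))}}{(2\pi)^{d(1-\eta/\nu)/2}}\|Fx\cd\|_{\ld}^{1-\eta/\nu}\|\Lambda_{2\nu}^{\nu/2}x\cd\|_{\ld}^{\eta/\nu}.
\]
Then I would apply the identity
\[
\|\Lambda_{2\nu}^{\nu/2}x\cd\|_{\ld}^2=\sum_{j=1}^d\|D^{\nu e_j}x\cd\|_{\ld}^2,
\]
which is derived in the excerpt just before the corollary by unfolding $\psi_{2\nu}^\nu(\xi)=|\xi_1|^{2\nu}+\cdots+|\xi_d|^{2\nu}$ via Plancherel. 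Raising this to the power $\eta/(2\nu)$ converts $\|\Lambda_{2\nu}^{\nu/2}x\cd\|_{\ld}^{\eta/\nu}$ into $\bigl(\sum_{j=1}^d\|D^{\nu e_j}x\cd\|_{\ld}^2\bigr)^{\eta/(2\nu)}$, producing~\eqref{Sl2a}. Sharpness transfers automatically, because any improvement of the constant in~\eqref{Sl2a} would, via the same identity, improve the constant in~\eqref{Sl2}, contradicting the sharpness already established in Theorem~\ref{T2}.

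Since the corollary is a direct specialization, there is no real obstacle; the only substantive point is bookkeeping, namely recognizing that the Plancherel identity for $\psi_{2\nu}^\nu$ turns the $L_2$-norm of $\Lambda_{2\nu}^{\nu/2}x$ into the stated sum of squared partial-derivative norms, and that this rewriting preserves both the constant and its sharpness.
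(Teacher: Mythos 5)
Your proposal is correct and follows exactly the paper's route: the corollary is obtained by substituting $\mu=2\nu$ into Theorem~2 and rewriting $\|\Lambda_{2\nu}^{\nu/2}x\cd\|_{\ld}^2$ as $\sum_{j=1}^d\|D^{\nu e_j}x\cd\|_{\ld}^2$ via the Plancherel identity derived just before the statement. Nothing is missing.
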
 

For integer $\nu$ inequality \eqref{Sl2a} can be rewritten in the form
$$\|\Lambda_\theta^{\eta/2}x\cd\|_{\ld}
\le\frac{d^{\eta(1/\theta-1/(2\nu))}}{(2\pi)^{d(1-\eta/\nu)/2}}
\|Fx\cd\|_{\ld}^{1-\eta/\nu}\biggl(\sum_{j=1}^d\biggl\|\frac{\partial^\nu x}{\partial t_j^\nu}\cd\biggr\|_{\ld}^2\biggr)^{\eta/(2\nu)}.$$

\section{Optimal recovery methods for $D^\alpha$}

Now we consider problem \eqref{ED} for $\Lambda=D^\alpha$ and $D=\Lambda_\mu^{\nu/2}$. Then for $I$ from \eqref{Ieq} we have
\begin{equation}\label{IIa}
I=\int_{\Pi^{d-1}}\frac{\left(\wtt_1^{\alpha_1}(\omega)\ldots\wtt_d^{\alpha_d}(\omega)
\right)^{q_1}}{\left(\sum_{k=1}^d
\wtt_k^{\,\mu}(\omega)\right)^{\nu q_1(1-\gamma_1)/\mu}}J(\omega)\,d\omega,
\end{equation}
where
$$\gamma_1=\frac{\nu-|\alpha|}{\nu+d(1/2-1/p)},\quad q_1=\frac1{\gamma_1(1/2-1/p)},\quad|\alpha|=\alpha_1+\ldots+\alpha_d.$$
It follows by \eqref{etd1} and \eqref{etd2} that $I<\infty$.

\begin{corollary}\label{cor1}
Let $2<p\le\infty$, $\nu>|\alpha|\ge0$, and $\mu>0$. Then
$$E_p(D^\alpha,\Lambda_\mu^{\nu/2})=\frac1{(2\pi)^{d\gamma_1/2}}C_p(\nu,|\alpha|)I^{1/q_1}
\delta^{\gamma_1},$$
where $I$ is defined by \eqref{IIa}.
The method
$$\wm(y)(t)=F^{-1}\left(\left(1-\beta\frac{\psi_\mu^\nu(t)}{t_1^{2\alpha_1}\ldots t_d^{2\alpha_d}}\right)_+
(it)^\alpha y(t)\right),$$
where
\begin{equation*}
\beta=\frac{1-\gamma_1}{(2\pi)^{d\gamma_1}}C_p^2(\nu,|\alpha|)\left(\delta I^{1/2-1/p}\right)^{2\gamma_1},
\end{equation*}
is optimal.

Moreover, the sharp inequality
$$\|D^\alpha x\cd\|_{\ld}
\le\frac{C_p(\nu,|\alpha|)I^{1/q_1}}{(2\pi)^{d\gamma_1/2}}\|Fx\cd\|_{\lpd}^{\gamma_1}
\|\Lambda_\mu^{\nu/2}x\cd\|_{\ld}^{1-\gamma_1}$$
holds.
\end{corollary}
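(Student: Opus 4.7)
The plan is to deduce this corollary directly from Theorem~\ref{pred1} by specializing to $\psi(\xi)=(i\xi)^\alpha$ and $\varphi(\xi)=\psi_\mu^{\nu/2}(\xi)$. With these choices the operator $\Lambda$ of \eqref{Lam} becomes exactly $D^\alpha$, and the operator $D$ of the general framework becomes $\Lambda_\mu^{\nu/2}$. The homogeneity hypotheses required by Theorem~\ref{pred1} are immediate: $|\psi(\xi)|=|\xi_1|^{\alpha_1}\cdots|\xi_d|^{\alpha_d}$ is homogeneous of degree $|\alpha|$ and nonzero almost everywhere, while $|\varphi(\xi)|=\psi_\mu^{\nu/2}(\xi)$ is homogeneous of degree $\nu$ and nonzero almost everywhere. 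Hence, with $\eta=|\alpha|$, the parameters $\gamma,q^*$ of Theorem~\ref{pred1} specialize to the $\gamma_1,q_1$ introduced just before the statement, and the assumption $\nu>|\alpha|\ge0$ together with $2<p\le\infty$ gives $\gamma_1\in(0,1)$, as required.

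Next I would evaluate the integral $I$ from \eqref{Ieq} under these substitutions. Passing to the polar coordinates introduced earlier and writing $\wtt_k(\omega)$ for the direction cosines, one obtains $\wps(\omega)=\wtt_1^{\alpha_1}(\omega)\cdots\wtt_d^{\alpha_d}(\omega)$ and $\wva(\omega)=\bigl(\sum_{k=1}^d\wtt_k^{\,\mu}(\omega)\bigr)^{\nu/\mu}$, so that \eqref{Ieq} rearranges into precisely \eqref{IIa}. The finiteness $I<\infty$ over the compact domain $\Pi^{d-1}$ then follows because the numerator is uniformly bounded (each $|\wtt_k(\omega)|\le1$) and the denominator is bounded away from zero by the inequalities \eqref{etd1} (for $\mu\le2$) and \eqref{etd2} (for $\mu>2$) established in the preceding section.

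Having verified all hypotheses of Theorem~\ref{pred1}, the three assertions of the corollary follow by direct substitution. The value of $E_p(D^\alpha,\Lambda_\mu^{\nu/2})$ and the expression for $\beta$ come verbatim from the theorem, only with $\eta$ replaced by $|\alpha|$ and $I$ taken in the form \eqref{IIa}. The optimal family reduces to
\[
\wm(y)(t)=F^{-1}\left(\left(1-\beta\frac{|\varphi(t)|^2}{|\psi(t)|^2}\right)_+\psi(t)y(t)\right)
=F^{-1}\left(\left(1-\beta\frac{\psi_\mu^\nu(t)}{t_1^{2\alpha_1}\cdots t_d^{2\alpha_d}}\right)_+(it)^\alpha y(t)\right),
\]
matching the stated form, and the sharp Hardy--Littlewood-type inequality follows on inserting $\|\Lambda x\|_{\ld}=\|D^\alpha x\|_{\ld}$ and $\|Dx\|_{\ld}=\|\Lambda_\mu^{\nu/2}x\|_{\ld}$ into the corresponding conclusion of Theorem~\ref{pred1}. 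There is no substantive obstacle: the proof is a mechanical specialization of an already-proved abstract result, and the only nontrivial checks — the homogeneity degrees and the finiteness of $I$ — have already been carried out in the discussion preceding the statement.
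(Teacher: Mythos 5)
Your proposal is correct and matches the paper's own treatment: the corollary is obtained by the same mechanical specialization of Theorem~\ref{pred1} to $\psi(\xi)=(i\xi)^\alpha$, $\varphi(\xi)=\psi_\mu^{\nu/2}(\xi)$, with the identification of \eqref{Ieq} as \eqref{IIa} and the finiteness of $I$ justified exactly as in the paper via \eqref{etd1} and \eqref{etd2}. No gaps.
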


Consider the case when $p=2$.

\begin{theorem}\label{T3}
Let $2\nu\ge\mu>0$ and $0<|\alpha|<\nu$. Then
\begin{equation}\label{Ea}
E_2(D^\alpha,\Lambda_\mu^{\nu/2})= \left(\frac{\delta}{(2\pi)^{d/2}}\right)^{1-|\alpha|/\nu}|\alpha|^{-|\alpha|/\mu}
\prod_{\substack{j=1\\\alpha_j\ne0}}^d\alpha_j^{\alpha_j/\mu},
\end{equation}
and all methods
\begin{equation}\label{mmaa}
\wm(y)(t)=F^{-1}\left(a(t)(it)^\alpha y(t)\right),
\end{equation}
where $a\cd$ are measurable functions satisfying the condition
\begin{equation}\label{aaa}
\psi_\theta^\eta(\xi)\left(\frac{|1-a(\xi)|^2}{\lambda_2\psi_\mu^\nu(\xi)}+\frac{|a(\xi)|^2}
{(2\pi)^d\lambda_1}\right)\le1,
\end{equation}
in which
\begin{align*}
\lambda_1&=\frac{|\alpha|^{-2|\alpha|/\mu}}{(2\pi)^d}\left(1-\frac{|\alpha|}\nu\right)
\left(\frac{(2\pi)^d}{\delta^2}
\right)^{|\alpha|/\nu}\prod_{\substack{j=1\\\alpha_j\ne0}}^d\alpha_j^{2\alpha_j/\mu},\\
\lambda_2&=\frac{|\alpha|^{-2|\alpha|/\mu+1}}\nu\left(\frac{(2\pi)^d}{\delta^2}
\right)^{|\alpha|/\nu-1}\prod_{\substack{j=1\\\alpha_j\ne0}}^d\alpha_j^{2\alpha_j/\mu},
\end{align*}
are optimal.

The sharp inequality
\begin{equation}\label{Sl2aa}
\|D^\alpha x\cd\|_{\ld}
\le\frac{|\alpha|^{-|\alpha|/\mu}}{(2\pi)^{d(1-|\alpha|/\nu)/2}}
\prod_{\substack{j=1\\\alpha_j\ne0}}^d\alpha_j^{\alpha_j/\mu}
\|Fx\cd\|_{\ld}^{1-|\alpha|/\nu}\|\Lambda_\mu^{\nu/2}x\cd\|_{\ld}^{|\alpha|/\nu}
\end{equation}
holds.
\end{theorem}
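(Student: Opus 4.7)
My plan is to mirror closely the argument of Theorem \ref{T2}, the only genuinely new ingredient being that the radial multiplier $\psi_\theta^\eta(\xi)$ is replaced by the monomial $|\xi^\alpha|^2=\prod_j|\xi_j|^{2\alpha_j}$, so the relevant extremal problem becomes anisotropic. First I would use \eqref{eq1.1} to reduce the lower bound to
\begin{equation*}
E_2^2(D^\alpha,\Lambda_\mu^{\nu/2})\ge\sup_{x\cd\in W_2,\,\|Fx\cd\|_{\ld}\le\delta}\|D^\alpha x\cd\|^2_{\ld},
\end{equation*}
and, following Theorem \ref{T2}, concentrate $|Fx_\varepsilon|^2$ on a small ball of shrinking volume around a point $\widehat\xi_0$ that maximizes $\prod_j|\xi_j|^{2\alpha_j}$ over the level set $\{\psi_\mu^\nu(\xi)\le(2\pi)^d/\delta^2\}$. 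A Lagrange-multiplier (equivalently, weighted AM--GM) computation restricted to the coordinates with $\alpha_j\ne0$ shows that the maximizer satisfies $|\widehat\xi_{0,j}|^\mu=(\alpha_j/|\alpha|)((2\pi)^d/\delta^2)^{\mu/(2\nu)}$, with maximal value $|\alpha|^{-2|\alpha|/\mu}\prod_{\alpha_j\ne0}\alpha_j^{2\alpha_j/\mu}((2\pi)^d/\delta^2)^{|\alpha|/\nu}$. Letting $\varepsilon\to0$ recovers the right-hand side of \eqref{Ea} as a lower bound.

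For the upper bound I would restrict to the family \eqref{mmaa}, pass to the Fourier side, set $z\cd=Fx\cd-y\cd$, and write
\begin{equation*}
\|D^\alpha x\cd-\wm(y)\cd\|^2_{\ld}=\frac{1}{(2\pi)^d}\iRd\prod_{j=1}^d|\xi_j|^{2\alpha_j}\bigl|(1-a(\xi))Fx(\xi)+a(\xi)z(\xi)\bigr|^2\,d\xi,
\end{equation*}
then apply the Cauchy--Bunyakovskii--Schwarz splitting of Theorem \ref{T2} with weights $\sqrt{\lambda_2}\psi_\mu^{\nu/2}$ and $(2\pi)^{d/2}\sqrt{\lambda_1}$ to obtain $e_2^2(D^\alpha,\Lambda_\mu^{\nu/2},\wm)\le\vraisup_\xi S(\xi)\cdot(\lambda_2+\lambda_1\delta^2)$, where $S(\xi)$ is the bracketed expression in \eqref{aaa} (understood, correcting the obvious typo, with the factor $\prod_j|\xi_j|^{2\alpha_j}$ in place of $\psi_\theta^\eta(\xi)$). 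Assuming $S\le1$ and substituting the prescribed $\lambda_1,\lambda_2$, a routine algebraic identity yields $\lambda_1\delta^2+\lambda_2=(\delta/(2\pi)^{d/2})^{2(1-|\alpha|/\nu)}|\alpha|^{-2|\alpha|/\mu}\prod_{\alpha_j\ne0}\alpha_j^{2\alpha_j/\mu}$, matching the lower bound and proving both \eqref{Ea} and the optimality of every $a(\cdot)$ in the specified class.

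To see that the class \eqref{aaa} is nonempty I would take the canonical choice
\begin{equation*}
a(\xi)=\frac{(2\pi)^d\lambda_1}{(2\pi)^d\lambda_1+\lambda_2\psi_\mu^\nu(\xi)},
\end{equation*}
reducing \eqref{aaa} to the pointwise inequality $\prod_j|\xi_j|^{2\alpha_j}\le(2\pi)^d\lambda_1+\lambda_2\psi_\mu^\nu(\xi)$. Fixing $K=\sum_j|\xi_j|^\mu$ and using the Lagrange maximum from the first paragraph, the left-hand side is bounded by $CK^{2|\alpha|/\mu}$ with $C=|\alpha|^{-2|\alpha|/\mu}\prod_{\alpha_j\ne0}\alpha_j^{2\alpha_j/\mu}$; setting $R=K^{2/\mu}$, the required bound collapses to $g(R)=(2\pi)^d\lambda_1+\lambda_2R^\nu-CR^{|\alpha|}\ge0$. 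Since $|\alpha|<\nu$, the function $g$ has a unique stationary point on $[0,\infty)$, namely $R_0=((2\pi)^d/\delta^2)^{1/\nu}$, and direct substitution of the given $\lambda_1,\lambda_2$ shows $g(R_0)=0$, so $g\ge0$ everywhere. Finally the sharp inequality \eqref{Sl2aa} follows by the normalization $\widehat x\cd=x\cd/(\|\Lambda_\mu^{\nu/2}x\cd\|_{\ld}+\varepsilon)$ and $\delta=\|F\widehat x\cd\|_{\ld}$, letting $\varepsilon\to0$, exactly as at the end of the proof of Theorem \ref{T2}.

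The principal obstacle I anticipate is the Lagrange step: one must verify that the stationary point is truly the global maximum of the anisotropic monomial on the level set, handle coordinates with $\alpha_j=0$ separately (so that both the product $\prod_{\alpha_j\ne0}\alpha_j^{2\alpha_j/\mu}$ and the test function $x_\varepsilon$ are well defined), and carry through the numerical identity relating $\lambda_1\delta^2+\lambda_2$ to the right-hand side of \eqref{Ea}. Each step is elementary but notation-heavy, and the assumption $2\nu\ge\mu$ should enter either in the validity of the Lagrange maximizer or in ensuring $D^\alpha x\cd\in\ld$ for $x\cd\in X_2$; the rest of the argument is a direct transcription of the proof of Theorem \ref{T2}.
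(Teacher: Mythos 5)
Your proposal is correct and follows the paper's skeleton almost exactly: the same concentrated test functions around the point with $|\xi_j|^\mu=(\alpha_j/|\alpha|)\bigl((2\pi)^d/\delta^2\bigr)^{\mu/(2\nu)}$ for the lower bound, the same Cauchy--Bunyakovskii--Schwarz splitting with weights $\sqrt{\lambda_2}\,\psi_\mu^{\nu/2}$ and $(2\pi)^{d/2}\sqrt{\lambda_1}$ for the upper bound (and you are right that $\psi_\theta^\eta$ in \eqref{aaa} is a typo for $\prod_j|\xi_j|^{2\alpha_j}$ --- the paper's own $S(\xi)$ uses the monomial), the same canonical choice of $a(\xi)$, and the same normalization trick for \eqref{Sl2aa}. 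The one place where you genuinely diverge is the verification that $S(\xi)\le1$: the paper passes to logarithmic coordinates $|\xi_j|=e^{t_j}$ and shows that the multivariate function $H(t)=-1+(2\pi)^d\lambda_1e^{-2(\alpha,t)}+\lambda_2\bigl(\sum_je^{\mu t_j-\frac{\mu}{\nu}(\alpha,t)}\bigr)^{2\nu/\mu}$ is convex with a zero of its gradient at a point where $H$ vanishes, whereas you first dominate $\prod_j|\xi_j|^{2\alpha_j}$ by $C\bigl(\sum_j|\xi_j|^\mu\bigr)^{2|\alpha|/\mu}$ via weighted AM--GM and then analyze the single-variable function $g(R)=(2\pi)^d\lambda_1+\lambda_2R^\nu-CR^{|\alpha|}$, exactly as the paper itself does in Theorem~\ref{T2}. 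Both computations check out (your identities for $R_0$ and for $\lambda_1\delta^2+\lambda_2$ are correct); your route is arguably more elementary, it unifies the proofs of Theorems~\ref{T2} and~\ref{T3}, and --- a small bonus --- it makes no visible use of the hypothesis $2\nu\ge\mu$, which in the paper is presumably tied to the convexity formulation of $H$.
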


\begin{proof}
It follows from \eqref{eq1.1} that
\begin{equation}\label{EDD2a}
E_2(D^\alpha,\Lambda_\mu^{\nu/2})\ge\sup_{\substack{x\cd\in W_2\\\|Fx\cd\|_{\ld}\le\delta}}\|D^\alpha x\cd\|_{\ld}.
\end{equation}
Consider the extremal problem
$$\|D^\alpha x\cd\|_{\ld}^2\to\max,\quad\|Fx\cd\|_{\ld}^2\le\delta^2,\quad
\|\Lambda_\mu^{\nu/2}x\cd\|_{\ld}^2\le1.$$
Given 
$$0<\varepsilon<\min\left\{|\alpha|^{-1/\mu}\left(\frac{(2\pi)^d}{\delta^2}\right)^{1/(2\nu)}
\alpha_j^{1/\mu}:
\alpha_j>0,\ j=1,\ldots,d\right\},$$
we set
\begin{multline*}
\wxi_\varepsilon=|\alpha|^{-1/\mu}\left(\frac{(2\pi)^d}{\delta^2}\right)^{1/(2\nu)}
(\alpha_1^{1/\mu},\ldots,\alpha_d^{1/\mu})-(\varepsilon_1,
\ldots,\varepsilon_d),\quad\varepsilon_j=\begin{cases}\varepsilon,&\alpha_j>0,\\
0,&\alpha_j=0,\end{cases},\\
B_\varepsilon=\{\xi\in\mathbb R^d:|\xi-\wxi_\varepsilon|<\varepsilon\,\}.
\end{multline*}
Consider a function $x_\varepsilon\cd$ such that
$$Fx_\varepsilon(\xi)=\begin{cases}\dfrac\delta{\sqrt{\mes B_\varepsilon}}\left(1+d\varepsilon^\nu\left(\dfrac{\delta^2}{(2\pi)^d}\right)^{\mu/(2\nu)}
\right)^{-1/2},&\xi\in B_\varepsilon,\\
0,&\xi\notin B_\varepsilon.\end{cases}$$
Then 
$$\|Fx_\varepsilon\cd\|^2_{\ld}=\delta^2\left(1+d\varepsilon^\nu
\left(\dfrac{\delta^2}{(2\pi)^d}\right)^{\mu/(2\nu)}\right)^{-1}\le\delta^2$$
and
\begin{multline*}
\|\Lambda_\mu^{\nu/2}x\cd\|^2_{\ld}\\
=\frac{\delta^2}{(2\pi)^d\mes B_\varepsilon}\left(1+d\varepsilon^\nu
\left(\dfrac{\delta^2}{(2\pi)^d}\right)^{\mu/(2\nu)}\right)^{-1}\int_{B_\varepsilon}(|\xi_1|^\mu+\ldots+
|\xi_d|^\mu)^{2\nu/\mu}\,d\xi\\
\le\frac{\delta^2}{(2\pi)^d}\left(1+d\varepsilon^\nu
\left(\dfrac{\delta^2}{(2\pi)^d}\right)^{\mu/(2\nu)}\right)^{-1}\left(\left(\frac{(2\pi)^d}{\delta^2}\right)^{\mu/(2\nu)}
|\alpha|^{-1}\sum_{j=1}^d\alpha_j+d\varepsilon^\mu\right)^{2\nu/\mu}\\=1.
\end{multline*}
By virtue of \eqref{EDD2a} we have
\begin{multline*}
E_2^2(D^\alpha,\Lambda_\mu^{\nu/2})\ge\|D^\alpha x_\varepsilon\cd\|_{L_2(\mathbb R^d)}^2\\
=\frac{\delta^2}{(2\pi)^d\mes B_\varepsilon}\left(1+d\varepsilon^\nu
\left(\dfrac{\delta^2}{(2\pi)^d}\right)^{\mu/(2\nu)}\right)^{-1}
\int_{B_\varepsilon}|\xi_1|^{2\alpha_1}
\ldots|\xi_d|^{2\alpha_d}\,d\xi\\
=\frac{\delta^2}{(2\pi)^d}\left(1+d\varepsilon^\nu
\left(\dfrac{\delta^2}{(2\pi)^d}\right)^{\mu/(2\nu)}\right)^{-1}|\widetilde\xi_1|^{2\alpha_1}
\ldots|\widetilde\xi_d|^{2\alpha_d},\quad
(\widetilde\xi_1,\ldots,\widetilde\xi_d)\in B_\varepsilon.
\end{multline*}
Letting $\varepsilon\to0$ we obtain the estimate
\begin{equation}\label{EEa}
E_2^2(D^\alpha,\Lambda_\mu^{\nu/2})\ge \left(\frac{\delta^2}{(2\pi)^d}\right)^{1-|\alpha|/\nu}|\alpha|^{-2|\alpha|/\mu}
\prod_{\substack{j=1\\\alpha_j\ne0}}^d\alpha_j^{2\alpha_j/\mu}.
\end{equation}

We will find optimal methods among methods \eqref{mmaa}. Passing to the Fourier transform we have
\begin{equation*}
\|D^\alpha x\cd-\wm(y)\cd\|^2_{\ld}\\
=\frac1{(2\pi)^d}\iRd|\xi_1|^{2\alpha_1}
\ldots|\xi_d|^{2\alpha_d}\left|Fx(\xi)-a(\xi)y(\xi)\right|^2\,d\xi.
\end{equation*}
We set $z\cd=Fx\cd-y\cd$ and note that
\begin{equation*}
\iRd|z(\xi)|^2\,d\xi\le\delta^2,\quad\frac1{(2\pi)^d}\iRd\psi_\mu^\nu(\xi)|Fx(\xi)|^2\,d\xi\le1.
\end{equation*}
Then
\begin{multline*}
\|D^\alpha x\cd-\wm(y)\cd\|^2_{\ld}\\
=\frac1{(2\pi)^d}\iRd|\xi_1|^{2\alpha_1}
\ldots|\xi_d|^{2\alpha_d}\left|\left(1-a(\xi)\right)Fx(\xi)+
a(\xi)z(\xi)\right|^2\,d\xi.
\end{multline*}
We write the integrand as
\begin{equation*}
|\xi_1|^{2\alpha_1}
\ldots|\xi_d|^{2\alpha_d}\left|\frac{(1-a(\xi))\sqrt{\lambda_2}\psi_\mu^{\nu/2}(\xi)
Fx(\xi)}{\sqrt{\lambda_2}\psi_\mu^{\nu/2}(\xi)}+
\frac{a(\xi)}{(2\pi)^{d/2}\sqrt{\lambda_1}}(2\pi)^{d/2}\sqrt{\lambda_1}z(\xi)\right|^2.
\end{equation*}
Applying the Cauchy-Bunyakovskii-Schwarz inequality we obtain the estimate
\begin{multline*}
\|D^\alpha x\cd-\wm(y)\cd\|^2_{\ld}\\
\le\vraisup_{\xi\in\mathbb R^d}S(\xi)\frac1{(2\pi)^d}\iRd\left(\lambda_2\psi_\mu^\nu(\xi)|Fx(\xi)|^2+
(2\pi)^d\lambda_1|z(\xi)|^2\right)\,d\xi,
\end{multline*}
where
\begin{equation*}
S(\xi)=|\xi_1|^{2\alpha_1}
\ldots|\xi_d|^{2\alpha_d}\left(\frac{|1-a(\xi)|^2}{\lambda_2\psi_\mu^\nu(\xi)}+
\frac{|a(\xi)|^2}{(2\pi)^d\lambda_1}\right).
\end{equation*}
If we assume that $S(\xi)\le1$ for almost all $\xi$, then taking into account \eqref{EEa}, we get
\begin{multline}\label{Uba}
e^2_2(D^\alpha ,\Lambda_\mu^{\nu/2},\wm)\\
\le\frac1{(2\pi)^d}\iRd\left(\lambda_2\psi_\mu^\nu(\xi)|Fx(\xi)|^2+
(2\pi)^d\lambda_1|z(\xi)|^2\right)\,d\xi
\le\lambda_2+\lambda_1\delta^2\\=\left(\frac{\delta^2}{(2\pi)^d}\right)^{1-|\alpha|/\nu}|\alpha|^{-2|\alpha|/\mu}
\prod_{\substack{j=1\\\alpha_j\ne0}}^d\alpha_j^{2\alpha_j/\mu}\le E_2^2(D^\alpha,\Lambda_\mu^{\nu/2}).
\end{multline}
This proves \eqref{Ea} and shows that the methods under consideration are optimal.

It remains to verify that the set of functions $a\cd$ satisfying \eqref{aaa} is nonempty. Put
\begin{equation*}
a(\xi)=\frac{(2\pi)^d\lambda_1}{(2\pi)^d\lambda_1+\lambda_2\psi_\mu^\nu(\xi)}.
\end{equation*}
Then
\begin{equation*}
S(\xi)=\frac{|\xi_1|^{2\alpha_1}
\ldots|\xi_d|^{2\alpha_d}}{(2\pi)^d\lambda_1+\lambda_2\psi_\mu^\nu(\xi)}.
\end{equation*}
Consider the function
$$H(t)=-1+(2\pi)^d\lambda_1e^{-2(\alpha,t)}+\lambda_2\biggl(\sum_{j=1}^de^{\mu t_j-\frac\mu\nu(\alpha,t)}\biggr)^{2\nu/\mu},$$
where $(\alpha,t)=\alpha_1t_1+\ldots\alpha_dt_d$. It is easy to prove that $H\cd$ is a convex function. Moreover, $H(\wt)=0$ and the derivative of $H\cd$ at the point $\wt$ is also zero, where
$$\wt=\left(\frac1\mu\ln\frac{\alpha_1}{|\alpha|}+\frac1{2\nu}\ln\frac{(2\pi)^d}{\delta^2},
\ldots,\frac1\mu\ln\frac{\alpha_d}{|\alpha|}+\frac1{2\nu}\ln\frac{(2\pi)^d}{\delta^2}\right).$$
Consequently, $H(t)\ge0$ for all $t\in\mathbb R^d$. It means that
$$e^{-2(\alpha,t)}\le(2\pi)^d\lambda_1+\lambda_2\biggl(\sum_{j=1}^de^{\mu t_j}\biggr)^{2\nu/\mu}.$$
Put $|\xi_j|=e^{t_j}$, $j=1,\ldots,d$. Then we obtain
$$|\xi_1|^{2\alpha_1}
\ldots|\xi_d|^{2\alpha_d}\le(2\pi)^d\lambda_1+\lambda_2\psi_\mu^\nu(\xi).$$
Thus, $S(\xi)\le1$.

The proof of \eqref{Sl2aa} is similar to the proof of \eqref{Sl2}.
\end{proof}

For $\mu=2\nu$ we obtain
\begin{corollary}\label{T3c}
Let $0<|\alpha|<\nu$. Then
$$E_2(D^\alpha,\Lambda_{2\nu}^{\nu/2})= \left(\frac{\delta}{(2\pi)^{d/2}}\right)^{1-|\alpha|/\nu}|\alpha|^{-|\alpha|/(2\nu)}
\prod_{\substack{j=1\\\alpha_j\ne0}}^d\alpha_j^{\alpha_j/(2\nu)},$$
and all methods
$$\wm(y)(t)=F^{-1}\left(a(t)(it)^\alpha y(t)\right),$$
where $a\cd$ are measurable functions satisfying the condition
$$\psi_\theta^\eta(\xi)\left(\frac{|1-a(\xi)|^2}{\lambda_2\psi_{2\nu}^\nu(\xi)}+\frac{|a(\xi)|^2}
{(2\pi)^d\lambda_1}\right)\le1,$$
in which
\begin{align*}
\lambda_1&=\frac1{(2\pi)^d}\left(1-\frac{|\alpha|}\nu\right)
\left(\frac{(2\pi)^d}{|\alpha|\delta^2}
\right)^{|\alpha|/\nu}\prod_{\substack{j=1\\\alpha_j\ne0}}^d\alpha_j^{\alpha_j/\nu},\\
\lambda_2&=\frac{|\alpha|}\nu\left(\frac{(2\pi)^d}{|\alpha|\delta^2}
\right)^{|\alpha|/\nu-1}\prod_{\substack{j=1\\\alpha_j\ne0}}^d\alpha_j^{\alpha_j/\nu},
\end{align*}
are optimal.

The sharp inequality
\begin{multline}\label{Sl2aaa}
\|D^\alpha x\cd\|_{\ld}\\
\le\frac{|\alpha|^{-|\alpha|/(2\nu)}}{(2\pi)^{d(1-|\alpha|/\nu)/2}}
\prod_{\substack{j=1\\\alpha_j\ne0}}^d\alpha_j^{\alpha_j/(2\nu)}
\|Fx\cd\|_{\ld}^{1-|\alpha|/\nu}\biggl(\sum_{j=1}^d\|D^{\nu e_j}x\cd\|_{\ld}^2\biggr)^{|\alpha|/(2\nu)}
\end{multline}
holds.
\end{corollary}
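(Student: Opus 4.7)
The plan is to derive this corollary as a direct specialization of Theorem~\ref{T3} to $\mu=2\nu$. Under this choice the hypothesis $2\nu\ge\mu>0$ of Theorem~\ref{T3} is satisfied (with equality), while $0<|\alpha|<\nu$ is exactly the hypothesis stated here. Hence Theorem~\ref{T3} applies, and it only remains to rewrite each of its conclusions in the form advertised in the corollary.

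First, I substitute $\mu=2\nu$ into the error formula \eqref{Ea}: the exponent $|\alpha|/\mu$ becomes $|\alpha|/(2\nu)$, and \eqref{Ea} reduces verbatim to the asserted value of $E_2(D^\alpha,\Lambda_{2\nu}^{\nu/2})$. The description of the optimal methods \eqref{mmaa} and the admissibility condition \eqref{aaa} then transfer by simply replacing $\psi_\mu^\nu$ with $\psi_{2\nu}^\nu$. For the multipliers $\lambda_1$ and $\lambda_2$ I substitute $\mu=2\nu$ (so that $2|\alpha|/\mu=|\alpha|/\nu$ and $2\alpha_j/\mu=\alpha_j/\nu$) and then absorb the resulting powers of $|\alpha|$ into the factor $(2\pi)^d/\delta^2$ by means of the identity
$$|\alpha|^{-|\alpha|/\nu}\biggl(\frac{(2\pi)^d}{\delta^2}\biggr)^{\!|\alpha|/\nu}
=\biggl(\frac{(2\pi)^d}{|\alpha|\delta^2}\biggr)^{\!|\alpha|/\nu}$$
and its analogue with exponent $|\alpha|/\nu-1$. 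This rearrangement yields $\lambda_1$ and $\lambda_2$ in the form stated.

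Finally, for the sharp inequality \eqref{Sl2aaa} I start from \eqref{Sl2aa} specialized to $\mu=2\nu$ and invoke the identity recorded just before the corollary,
$$\|\Lambda_{2\nu}^{\nu/2}x\cd\|_{\ld}^2=\sum_{j=1}^d\|D^{\nu e_j}x\cd\|_{\ld}^2,$$
to rewrite $\|\Lambda_{2\nu}^{\nu/2}x\cd\|_{\ld}^{|\alpha|/\nu}$ as $\bigl(\sum_{j=1}^d\|D^{\nu e_j}x\cd\|_{\ld}^2\bigr)^{|\alpha|/(2\nu)}$. No new extremal construction and no new Cauchy--Bunyakovskii--Schwarz estimate is required. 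The only step that calls for genuine care is the exponent bookkeeping in $\lambda_2$, where the factors $|\alpha|^{-2|\alpha|/\mu+1}$ and $\prod_{\alpha_j\ne0}\alpha_j^{2\alpha_j/\mu}$ must be rewritten consistently under $\mu=2\nu$; this is the sole place where a computational slip could occur, and is the main (modest) obstacle in the proof.
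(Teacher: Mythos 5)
Your proposal is correct and is exactly the paper's route: the corollary is stated with no separate proof beyond the phrase ``For $\mu=2\nu$ we obtain,'' i.e.\ it is the specialization of Theorem~\ref{T3} to $\mu=2\nu$ combined with the identity $\|\Lambda_{2\nu}^{\nu/2}x\cd\|_{\ld}^2=\sum_{j=1}^d\|D^{\nu e_j}x\cd\|_{\ld}^2$ recorded before it, precisely as you describe. One remark on the point you rightly flagged as delicate: carrying out the $\lambda_2$ bookkeeping gives $\lambda_2=\frac1\nu\bigl(\frac{(2\pi)^d}{|\alpha|\delta^2}\bigr)^{|\alpha|/\nu-1}\prod_{\alpha_j\ne0}\alpha_j^{\alpha_j/\nu}$, so the prefactor $\frac{|\alpha|}\nu$ printed in the corollary is too large by a factor $|\alpha|$ (this can be cross-checked against the requirement $\lambda_2+\lambda_1\delta^2=E_2^2$ from the proof of Theorem~\ref{T3}); that is a typo in the paper's statement, not a defect in your argument.
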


For integer $\nu$ inequality \eqref{Sl2aaa} can be rewritten in the form
\begin{multline*}
\|D^\alpha x\cd\|_{\ld}\\
\le\frac{|\alpha|^{-|\alpha|/(2\nu)}}{(2\pi)^{d(1-|\alpha|/\nu)/2}}
\prod_{\substack{j=1\\\alpha_j\ne0}}^d\alpha_j^{\alpha_j/(2\nu)}
\|Fx\cd\|_{\ld}^{1-|\alpha|/\nu}\biggl(\sum_{j=1}^d\biggl\|\frac{\partial^\nu x}{\partial t_j^\nu}\cd\biggr\|_{\ld}^2\biggr)^{|\alpha|/(2\nu)}.
\end{multline*}

\end{document}